\newcommand{\argmax}{\operatornamewithlimits{argmax}}
\newcommand{\argmin}{\operatornamewithlimits{argmin}}
\title{Convexity and multi-dimensional screening for spaces with different dimensions \footnote{These results are based on the author's doctoral thesis in mathematics at the University of Toronto.  He is grateful to his thesis advisor, Robert McCann, for many stimulating discussions, and to Guillaume Carlier and Lars Stole for helpful comments regarding the uniqueness of optimal pricing plans.  The author was supported in part by an NSERC postgraduate scholarship and a OGSST scholarship.}}
\author{BRENDAN PASS \footnote{Department of Mathematical and Statistical Sciences, 632 CAB, University of Alberta, Edmonton, Alberta, Canada, T6G 2G1 pass@ualberta.ca.}}
\begin{document}

\maketitle

\begin{abstract}
We study the principal-agent problem.  We show that $b$-convexity of the space of products, a condition which appears in a recent paper by Figalli, Kim and McCann \cite{fkm}, is necessary to formulate the problem as a maximization over a convex set.  We then show that when the dimension $m$ of the space of types is larger than the dimension $n$ of the space of products, this condition implies that the extra dimensions do not encode independent economic information.  When $m$ is smaller than $n$, we show that under $b$-convexity of the space of products, it is always optimal for the principal to offer goods only from a certain prescribed subset.  We show that this is equivalent to offering an $m$-dimensional space of goods. 
\end{abstract}

\section{Introduction}
This paper concerns principal-agent type problems, which arise frequently in a variety of different contexts in economic theory.  The following formulation can be found in Wilson \cite{wil}, Armstrong \cite{arm} and Rochet and Chone \cite{rc}.  A monopolist wants to sell goods to a distribution of buyers.  Knowing only the preference $b(x,y)$ that a buyer of type $x \in X$ has for a good of type $y \in Y$, the density $d\mu(x)$ of the buyer types and the cost $c(y)$ to produce the good $y$, the monopolist must decide which goods to produce and how much to charge for them in order to maximize her profits.

When the distribution of buyer types $X$ and the available goods $Y$ are either discrete or $1$-dimensional, this problem is well understood \cite{spen}\cite{mir}\cite{mr}\cite{barmy}.  However, it is typically more realistic to distinguish between both consumer types and goods by more than one characteristic.  An illuminating illustration of this is outlined by Figalli, Kim and McCann \cite{fkm}: consumers buying automobiles may differ by, for instance, their income and the length of their daily commute, while the vehicles themselves may vary according to their fuel efficiency, safety, comfort and engine power, for example.  It is desirable, then, to study models where the respective dimensions $m$ and $n$ of $X$ and $Y$ are greater than $1$ \cite{McAMcM}\cite{QR}\cite{rs}.  This \textit{multi-dimensional} screening problem is much more difficult and relatively little is known about it; for a review and an extensive list of references, see the book by Basov \cite{bas2} 

When $m=n$ and the preference function $b(x,y):=x \cdot f(y)$ is linear in types, Rochet and Chone developed an algorithm for studying this problem \cite{rc}.  A key element in their analysis is that, in this case, the problem may be formulated mathematically as an optimization problem over the set of convex functions, which is itself a convex set. They were then able to deduce the existence and uniqueness of an optimal pricing strategy, as well as several interesting economic characteristics of it.  Basov then analyzed the case where $b$ is linear in types but $m \neq n$ \cite{bas}.  When $m<n$, he was able to essentially reduce the $n$-dimensional space $Y$ to an $m$-dimensional space of \textit{artificial} goods and then apply the machinery of Rochet and Chone.  When $m>n$, no such reduction is possible in general.  Under additional hypotheses, however, he showed that the solution actually coincides with the solution to a similar problem where both spaces are $m$-dimensional.

For more general preference functions, Carlier, using tools from the theory of optimal transportation, was able to formulate the problem as the maximization of a functional $P$ over a certain set of functions $U_{b,\phi}$ (a subset of the so called $b$-convex functions, which will be defined below) \cite{car}.  He was then able to assert the existence of a solution to this problem; that is, the existence of an optimal pricing schedule; an equivalent result is also proved in \cite{mp}.  However, for general functions $b$, the set of $b$-convex functions may not be convex and so characterizing the solution using either computational or theoretical tools is an extremely imposing task.  Very little progress had been made in this direction until recently, when Figalli, Kim and McCann \cite{fkm} found necessary and sufficient conditions on $b$ for $U_{b,\phi}$ to be convex, assuming $n=m$.  Convexity is a very powerful tool in variational problems of this type, which can be exploited both theoretically and numerically.  Assuming in addition that the cost $c$ is $b$-convex, Figalli, Kim and McCann demonstrated that the functional $P$ is concave and from here were able to prove uniqueness of the solution and demonstrate that some of the interesting economic features observed by Rochet and Chone persist in this setting.  Surprisingly, the tools they use are also adapted from an optimal transportation context; their necessary and sufficient condition is derived from a condition developed by Ma, Trudinger and Wang \cite{mtw}, governing the regularity of optimal maps.

Although the result of Figalli, Kim and McCann represents major progress, it is limited in that they had to assume that the spaces of types and products were of the same dimension.  There are many interesting and relevant economic models in which these spaces have different dimensions, as outlined in, for example, Basov \cite{bas2}.  Our primary goal here is to study how the results in \cite{fkm} extend to the case when $m \neq n$; in particular, we want to determine under what conditions the set of $b$-convex functions is convex for general values of $m$ and $n$.  Our first contribution is to establish a necessary condition for the convexity of this set.  This condition, known as $b$-convexity of $Y$, was a hypothesis in \cite{fkm}; prior to that, to the best of my knowledge, it had not been explored in the principal-agent context, although it is well known in the optimal transportation literature, since the work of Ma, Trudinger and Wang \cite{mtw}.

We then study separately the cases $m>n$ and $m<n$.  The analysis here parallels the author's recent work on the regularity of optimal transportation between spaces whose dimensions differ \cite{p}.  When $m>n$, we show that the $b$-convexity of $Y$ implies that the dimensions cannot differ in a meaningful way.  That is, although $b$ may \emph{appear} to depend on an $m$ dimensional variable, there is a natural disintegration of $X$ into smooth sub-manifolds of dimension $m-n$ such that, no matter how the monopolist sets her prices, types in the same sub-manifold \textit{always} choose the same good.  Therefore, types in the same set are indistinguishable, and rather than working in an $m$ dimensional space, we may as well identify the types in a single sub-manifold and work instead in the resulting $n$ dimensional quotient space. 

When $n>m$, the generalized Spence-Mirrlees single crossing condition, found for example in Basov \cite{bas2}, cannot hold and consequently consumers' marginal utilities cannot uniquely determine which product they buy.  In this case, given a price schedule, a certain buyer's surplus may be maximized by many different goods, making him indifferent between those goods.  The monopolist's profits will be very different, however, depending on which good the buyer chooses.  A naive possible solution would be to only produce from the indifference set the good which maximizes the monopolist's profit; however, in doing so, she may exclude a good which would maximize her profit from another buyer.  It turns out that the $b$-convexity on $X$ (which was also an assumption  in \cite{fkm}) precludes this from happening; under this condition, we can again reduce the problem to one where the two spaces share the same dimension.  A special case of this result where $b(x,y) =x \cdot v(y)$ for a function $v: Y \mapsto  \mathbb{R}^m$ was established by Basov \cite{bas}.

\section{Assumptions and mathematical formulation}
We will assume that the space of types $X \subseteq \mathbb{R}^m$ and the space of goods $Y \subseteq \mathbb{R}^n$ are open and bounded.

Before formulating the problem mathematically, we recall the conditions on $b$ imposed by Figalli, Kim and McCann.  Our formulations will appear slightly different, as they must apply to the more general case $m \neq n$; when $n=m$ they coincide exactly with the conditions in \cite{fkm}.  In what follows, $D_xb(x,y) \in \mathbb{R}^m$ and $D_yb(x,y) \in \mathbb{R}^n$ will denote the differentials of $b$ with respect to $x$ and $y$ respectively.  $D^{2}_{xy}b(x,y)$ will denote the $m \times n$ matrix of mixed, second order, partial derivatives.\\
\\
\textbf{(B0)}: The function $b \in C^4(\overline{X} \times \overline{Y})$.\\
\textbf{(B1)}: (bi-twist) For all $x_0 \in \overline{X}$ and $y_0 \in \overline{Y}$, the level sets of the maps $y \mapsto D_xb(x_0,y) $ and $x \mapsto D_yb(x,y_0)$ are connected and $D^{2}_{xy}b(x_0,y_0)$ has full rank.\\
\textbf{(B2)}: For all $x_0 \in \overline{X}$ and $y_0 \in \overline{Y}$, the images $D_xb(x_0, \overline{Y})$ and $D_yb(\overline{X}, y_0)$ are convex.  If $D_xb(x_0, \overline{Y})$ is convex for all $x_0$, we say that $\overline{Y}$ is $b$-convex, while if $D_yb(\overline{X}, y_0)$ is convex for all $y_0$ we say that $\overline{X}$ is $b$-convex.\\
\textbf{(B3)}: For all $x_0 \in X$ and $y_0 \in Y$, we have 
\begin{equation*}
\frac{\partial^4}{\partial t^2 \partial s^2}b(x(s), y(t))\Big|_{(s,t)=(0,0)}\geq 0,
\end{equation*} 
whenever the curves $s \in [-1,1] \mapsto D_yb(x(s), y_0)$ and $t \in [-1,1] \mapsto D_xb(x_0, y(t))$ form affinely parametrized line segments.\\
\textbf{(B3u)}: \textbf{(B3)} holds and, whenever $\dot{x}(0)\cdot D^{2}_{xy}b(x_0,y_0) \neq 0$ and $D^{2}_{xy}b(x_0,y_0)\cdot \dot{y}(0) \neq0$, the inequality is strict. \\
\\
As was emphasized by Figalli, Kim and McCann, these conditions are invariant under reparametrization of $X$ and $Y$.  This means that they are in some sense economically natural;  they do not depend on the coordinates used to parametrize the problem \cite{fkm}.

Let us take a moment to explain the meaning of condition \textbf{(B1)}.  Assume momentarily that $m \geq n$.  Then the full rank condition implies that $y \mapsto D_xc(x_0,y)$ is locally injective and so connectedness of its level sets implies its global injectivity.  Hence, we recover the generalized Spence-Mirrlees, or generalized single crossing, condition found in, for example, Basov \cite{bas2} (more precisely, we obtain the strengthened version in \cite{fkm}).  On the other hand, if $m<n$, the generalized Spence-Mirrlees condition cannot hold; however, as we will establish, in certain cases $\textbf{(B1)}$ is a suitable replacement.

Much of our attention here will be devoted to \textbf{(B2)}.  For a bilinear $b$, this condition coincides with the usual notion of convexity of the sets $\overline{X}$ and $\overline{Y}$; for more general $b$, it implies convexity of $\overline{X}$ and $\overline{Y}$ after an appropriate change of coordinates \cite{fkm}.  We will see in the next section that the convexity of $D_xb(x_0, \overline{Y}) \subseteq \mathbb{R}^m$ is a necessary condition for the monopolist's problem to be a convex program; in section 5, we will show that when $m<n$ the convexity of $D_yb(\overline{X}, y_0)\subseteq \mathbb{R}^n$ reduces the problem to a more tractable problem in equal dimensions.  

The relevance of \textbf{(B3)} and \textbf{(B3u)} to economic problems was established in \cite{fkm}.  They are, respectively, strengthenings of the conditions \textbf{(A3w)} and \textbf{(A3)}, which are well known in optimal transportation due to their intimate connection with the regularity of optimal maps \cite{mtw} \cite{loeper}.  Background on optimal transportation can be found in \cite{V} \cite{V2}.

We are now ready to review the mathematical formulation of the principal-agent problem.  Suppose that the monopolist sets a price schedule $v(y)$; $v(y)$ is the price she charges for good $y$.  Buyer $x$ chooses to buy the good that maximizes $b(x,y)-v(y)$.  We therefore define the utility for buyer $x$ to be 

\begin{equation*}
 v^b(x)= \sup_{y \in Y} b(x,y)-v(y)
\end{equation*}

Functions of this type are called $b$-convex functions; we will denote by $U_b$ the set of all such functions.

We assume the existence of a $y_{\phi} \in \overline{Y}$ that the monopolist \textit{must} offer at cost; that is, for any price schedule $v$
\begin{equation}
 v(y_{\phi})=c(y_{\phi}) \label{null}
\end{equation}

If both sides in equation (\ref{null}) are equal to zero, we can interpret $y_{\phi}$ as the null good, and equation  (\ref{null}) as representing the consumers' option not to purchase any product (and the monopolist's obligation not to charge them should they exercise this option).  Note that the restriction $v(y_{\phi})=c(y_{\phi})$ immediately implies $v^b(x) \geq u_{\phi}(x):=b(x,y_{\phi})-c(y_{\phi})$.

Let  $y_{v^b}(x) \in \argmax_{y \in \overline{Y}}(b(x,y)-v(y))$.   Assuming that a buyer of type $x$ chooses to buy good $y_{v^b}(x)$ \footnote{The generalized Spence-Mirlees condition implies that for almost all $x$, there is exactly one $y$ maximizing $b(x,y)-v(y)$, and so under this condition, the function $y_{v^b}$ is uniquely determined from $v^b$ almost everywhere.}, the monopolist's profits from this buyer is then $v(y_{v^b}(x))-c(y_{v^b}(x))=b(x,y_{v^b}(x))-v^b(x)-c(y_{v^b}(x))$ and her total profits are:

\begin{equation*}
 P(v^b) :=\int_{x}b(x,y_{v^b}(x))-v^b(x)-c(y_{v^b}(x))d\mu(x)
\end{equation*}

The monopolist's goal, of course, is to maximize her profits.  That is, to maximize $P(v^b)$ over the set $U_{b,\phi}$ of $b$-convex functions which are everywhere greater than $u_{\phi}$ (and, if the generalized Spence-Mirlees condition fails to hold, over all functions $y_{v^b}(x) \in \argmax_{y \in \overline{Y}}(b(x,y)-v(y))$).

The main result of \cite{fkm} is that when $m=n$, under hypotheses \textbf{(B0)}-\textbf{(B2)} convexity of the set $U_{b,\phi}$ is equivalent to \textbf{(B3)}.

\section{b-convexity of the space of products}
This section establishes the following result, which is novel even when $m=n$.
\newtheorem{connec}{Proposition}[section]
\begin{connec}\label{connec}
 If $\overline{Y}$ is not $b$-convex at some point $x \in X$, the set $U_{b,\phi}$ is not convex.
\end{connec}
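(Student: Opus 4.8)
\noindent\emph{Proof proposal.} The plan is to contradict convexity of $U_{b,\phi}$ by producing two functions in $U_{b,\phi}$ a convex combination of which fails to be $b$-convex. The engine is a single observation: the gradient of a $b$-convex function $w=v^b$, at any point of the open set $X$ where $w$ is differentiable, must lie in $D_xb(x,\overline{Y})$. Indeed, if the supremum defining $w(x)$ is attained at some $y^{\ast}\in\overline{Y}$, then $x'\mapsto w(x')-b(x',y^{\ast})+v(y^{\ast})$ is nonnegative and vanishes at the interior point $x$; since $w$ and $b(\cdot,y^{\ast})$ are differentiable there, the gradient of this difference vanishes at $x$, giving $Dw(x)=D_xb(x,y^{\ast})\in D_xb(x,\overline{Y})$. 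Attainment of the supremum over the compact set $\overline{Y}$ is what makes this envelope argument legitimate, and it is guaranteed by taking the potential $v=w^b$, which is lower semicontinuous as a supremum of continuous functions.

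First I would invoke the hypothesis. Because $\overline{Y}$ is not $b$-convex at $x$, the set $D_xb(x,\overline{Y})\subseteq\mathbb{R}^m$ fails to be convex, so there exist goods $y_0,y_1\in\overline{Y}$ and a weight $t\in(0,1)$ with
\[
 p_t:=(1-t)\,D_xb(x,y_0)+t\,D_xb(x,y_1)\notin D_xb(x,\overline{Y}).
\]
Next I would construct two competitors in $U_{b,\phi}$ realizing the gradients $D_xb(x,y_i)$ at $x$. For each $i$ choose a constant $\lambda_i$ small enough that $b(x,y_i)-\lambda_i>u_{\phi}(x)$, and set $u_i:=\max\bigl(b(\cdot,y_i)-\lambda_i,\,u_{\phi}\bigr)$. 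Each $u_i$ is $b$-convex, being the supremum of two $b$-affine functions (equivalently $v^b$ for the price schedule charging $\lambda_i$ for $y_i$, $c(y_{\phi})$ for $y_{\phi}$, and $+\infty$ elsewhere); it lies above $u_{\phi}$ and respects $v(y_{\phi})=c(y_{\phi})$, so $u_i\in U_{b,\phi}$. By the strict inequality and continuity of $b$, the function $u_i$ coincides with the smooth $b$-affine function $b(\cdot,y_i)-\lambda_i$ on a neighborhood of $x$, whence $u_i$ is differentiable at $x$ with $Du_i(x)=D_xb(x,y_i)$.

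Finally I would examine $u_t:=(1-t)u_0+tu_1$. On the common neighborhood of $x$ it equals a convex combination of the two smooth pieces, so it is differentiable at $x$ with $Du_t(x)=p_t\notin D_xb(x,\overline{Y})$. If $U_{b,\phi}$ were convex, $u_t$ would be $b$-convex, and the observation of the first paragraph would force $Du_t(x)\in D_xb(x,\overline{Y})$, a contradiction; hence $U_{b,\phi}$ is not convex. The step I expect to demand the most care is precisely that gradient observation: one must ensure the supremum defining a $b$-convex function is genuinely attained on $\overline{Y}$, so that the envelope argument applies, which is why reducing to the lower semicontinuous potential $v=w^b$ is the essential technical point. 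Everything else—the explicit construction of the $u_i$ and the differentiability of their combination at $x$—is elementary given conditions \textbf{(B0)}–\textbf{(B2)}.
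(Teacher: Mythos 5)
Your proposal is correct and follows essentially the same route as the paper: pick $y_0,y_1$ witnessing the failure of convexity of $D_xb(x,\overline{Y})$, build two functions in $U_{b,\phi}$ whose gradients at $x$ are $D_xb(x,y_i)$, and rule out $b$-convexity of the interpolant via the first-order condition that the gradient of a $b$-convex function at a point of differentiability must lie in $D_xb(x,\overline{Y})$ (the paper likewise reduces to the potential $v_t=(v_t^b)^b$ to guarantee attainment of the supremum on the compact set $\overline{Y}$). The only difference is that you explicitly construct the competitors as $\max\bigl(b(\cdot,y_i)-\lambda_i,\,u_{\phi}\bigr)$, a detail the paper leaves implicit when it simply asserts the existence of suitable $v^b_0,v^b_1$.
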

\begin{proof}
  Suppose $Y$ is not $b$-convex at $x \in X$.  Then there exist $y_0, y_1 \in \overline{Y}$ and a $t \in (0,1)$ such that $(1-t)\cdot D_xb(x, y_0) + t \cdot D_xb(x,y_1) \notin D_xb(x,Y)$.

Now, choose $b$-convex functions $v^{b}_0, v^{b}_1 \geq u_{\phi}$ such that $v^{b}_i$ is differentiable at $x$ and $Dv^{b}_i(x)=D_xb(x, y_i)$, for $i=0,1$.  Define $v^{b}_t=(1-t)\cdot v^{b}_0 +t\cdot v^{b}_1$; we will show that $v^{b}_t$ is not $b$-convex, which will imply that $U_{b,\phi}$ is not convex.  Now,
\begin{eqnarray}\nonumber
Dv^{b}_t(x) &=& (1-t)\cdot Dv^{b}_0(x) +t\cdot Dv^{b}_1(x)\\ 
&=&(1-t)\cdot D_xb(x, y_0) + t \cdot D_xb(x,y_1) \notin D_xb(x,\overline{Y}) \label{notcon}
\end{eqnarray}

Now, assume $v^{b}_t$ is $b$-convex; then 
\begin{equation}
v^{b}_t(x) =\sup_{y \in Y}b(x,y)-v_t(y)\label{vb} 
\end{equation}
for some price schedule $v_t$. Without loss of generality, we may assume $v_t$ is $b$-convex: $v_t(x) =\sup_{x \in X}b(x,y)-v^b_t(x)$, which implies that $v_t(x)$ is continuous \cite{gm}.  By compactness of $\overline{Y}$ and continuity of $y \mapsto b(x,y)-v_t(y)$, the supremum in (\ref{vb}) is attained by some $y_t \in \overline{Y}$, $v^{b}_t(x) =b(x,y_t)-v^b(y_t)$.  Now, for all $\overline{x} \in X$, we have $v^{b}_t(\overline{x}) \geq b(\overline{x},y_t)-v_t(y_t)$ and so the function $\overline{x} \mapsto v^{b}_t(\overline{x})-b(\overline{x},y_t)$ is minimized at $\overline{x}=x$.  It now follows that $Dv^{b}_t(x)=D_xb(x,y_t) \in D_xb(x,Y)$, contradicting (\ref{notcon}).  We conclude that $v^{b}_t$ cannot be $b$-convex.  As $v^{b}_t$ is a weighted average of $b$-convex functions, this yields the desired result.
\end{proof}

\newtheorem{stren}[connec]{Remark}
\begin{stren}
 This result can be seen as a slight strengthening of the result of Figalli, Kim and McCann \cite{fkm}; assuming $n=m$, \textbf{(B0)}, \textbf{(B1)} and the $b$-convexity of $\overline{X}$, the main result of \cite{fkm} combines with Proposition \ref{connec} to imply that the convexity of $U_{b,\phi}$ is equivalent to the $b$-convexity of $\overline{Y}$ and \textbf{(B3)}.  We will see in the next section that this extends nominally to the case $m>n$, although, as we will show, in that case $\overline{Y}$ cannot be $b$-convex unless all the economic information encoded in $X$ can actually be encoded in an $n$-dimensional space. 
\end{stren}

An important consequence on the convexity of $U_{b,\phi}$ is the uniqueness of the optimal pricing schedule.  Assuming that the cost $c$ is strictly $b$-convex, $P$ is strictly concave functionl; if it is defined on a concave set, its maximum must be unique.  The following elementary example shows that when $b$-convexity of $Y$ (and hence convexity of $U_{b,\phi}$) fails, the principal's optimal strategy may not be unique.
\newtheorem{nonunique}[connec]{Example}
\begin{nonunique}
Let $X=[0,1]$ be the unit interval and $Y=\{0,1\}$ be a set of two points, including the null good $0$; that is, the principle only offers one good, $y=1$.  Take $b(x,y)=xy+y$ to be bilinear and $c(y)=y^2$.  Let the density of consumer types be $f(x)=60x^2-80x+29$.  To make a profit, the price $v$ the principal sets for her good must be between $1$ and $2$; she clearly cannot make money by charging less than the cost $1^2=1$ of producing the good $y=1$, and if she sets the price higher than $2$, every consumer would opt out.  A straightforward calculation shows that her profits are $(v-\frac{3}{2})^2-20(v-\frac{3}{2})^4+1$ which is maximized at $v=\frac{3}{2}\pm \frac{1}{2\sqrt{10}}$.

The profit functional, written in terms of the utility functions $v^b(x)=\sup_{y \in Y}xy+y-v(y)$ is 
\begin{equation*}
\int_0^1 x\frac{dv^b}{dx}+\frac{dv^b}{dx}-v^b-(\frac{dv^b}{dx})^2 dx
\end{equation*}
which is strictly concave.  However, the only allowable price schedules are of the form $v(0)=0, v(1) = v \in [1,2]$ and so the only allowable utility functions are of the form 
\begin{eqnarray}
v^b(x)&=&\max _{y =0,1}xy+y-v(y) \\
&=&\max\{0, x-v+1\} 
\end{eqnarray}
for some constant $v \in [1,2]$.  The convex interpolant of two functions of this form fails to have the same form; that is, the set of allowable utilities is not convex, precisely because the space $Y$ is not convex (recall that convexity and $b$-convexity of $Y$ are equivalent for bilinear preferences).  Hence, uniqueness fails.  If the principal had access to a convex set of goods (for example, the whole space $[0,1]$) she could construct a more sophisticated pricing strategy which would earn her a higher profit than either of the maxima exhibited in this example.

\end{nonunique}
\section{$m>n$}

In this section we focus on the case where $m>n$.  We will show that the $b$-convexity of the space of products $Y$ implies that $X$ can be reduced to an $n$-dimensional space without losing any economic information.  The analysis in this section strongly parallels the author's work on the regularity of optimal maps in an analogous setting \cite{p}.  Though many of the proofs in this section are similar to those in \cite{p}, we reproduce them here for the reader's convenience.

The definition below concerns the subset $D_xb(x,Y) \subseteq \mathbb{R}^m$; in general, condition \textbf{(B1)} ensures that this set is an $n$-dimensional submanifold of $\mathbb{R}^m$.
\newtheorem{lin}{Definition}[section]
\begin{lin}
We say the domain $Y$ looks $b$-linear from $x \in X$ if $D_xb(x,Y)$ is contained in a shifted $n$-dimensional, linear subspace of $\mathbb{R}^m$.  We say $Y$ is $b$-linear with respect to $X$ if it looks $b$-linear from every $x \in X$.
\end{lin}

When $m=n$, $b$-linearity is automatically satisfied.  When $m>n$, this is no longer true; $D_xb(x,Y)$ is an $n$-dimensional submanifold that may or may not be contained in an $n$-dimensional shifted linear subspace.  However, if $D_xb(x,Y)$ is convex, it must be contained in such a subspace and so $b$-convexity clearly implies $b$-linearity.  

We will also have reason to consider the level set of $\overline{x} \mapsto D_yb(\overline{x},y)$ passing through $x$, $L_x(y):=\{\overline{x} \in X: D_yb(\overline{x},y)=D_yb(x,y)\}$. 

The relationship between $b$-linearity and the sets $L_x(y)$ is expressed by the following result.

\newtheorem{linlev}[lin]{Lemma}
\begin{linlev}\label{linlev}(i) $Y$ looks $b$-linear from $x \in X$ if and only if $T_x(L_x(y))$ is independent of $y$; that is $T_x(L_x(y_0))=T_x(L_x(y_1))$ for all $y_0,y_1 \in Y$.
\\(ii) If the level sets $L_x(y)$ are all connected, then $Y$ is $b$-linear with respect to $X$ if and only if $L_x(y)$ is independent of $y$ for all $x$
\end{linlev}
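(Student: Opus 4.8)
The plan is to translate both equivalences into statements about the mixed Hessian $A(x,y) := D^{2}_{xy}b(x,y)$, which by \textbf{(B1)} has full rank $n$ everywhere, and to read off the two relevant tangent spaces from $A$. Since $\overline{x} \mapsto D_yb(\overline{x},y)$ is a submersion with differential $A(x,y)^{T}$ (an $n\times m$ matrix of rank $n$), its level set $L_x(y)$ is a smooth $(m-n)$-dimensional submanifold with $T_x(L_x(y)) = \ker A(x,y)^{T} = (\mathrm{Im}\,A(x,y))^{\perp}$. On the other side, $y \mapsto D_xb(x,y)$ is an immersion with differential $A(x,y)$, so $D_xb(x,Y)$ is an $n$-dimensional immersed submanifold of $\mathbb{R}^m$ whose tangent space at the point $D_xb(x,y)$ is $\mathrm{Im}\,A(x,y)$. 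Thus the two tangent spaces are orthogonal complements of each other in $\mathbb{R}^m$, and every statement about one translates to the other.

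For part (i), I would use the standard fact that a connected $n$-dimensional immersed submanifold lies in a shifted $n$-dimensional linear subspace if and only if its tangent space is constant. The forward implication is immediate; for the reverse, I would fix a base point $y_*$ and integrate $\frac{d}{dt}D_xb(x,\gamma(t)) = A(x,\gamma(t))\dot{\gamma}(t)$ along a path $\gamma$ joining $y_*$ to $y$ in the (connected) set $Y$, observing that this derivative always lies in the fixed subspace $\mathrm{Im}\,A$, so that $D_xb(x,y) - D_xb(x,y_*)$ does as well. Hence $Y$ looks $b$-linear from $x$ if and only if $\mathrm{Im}\,A(x,y)$ is independent of $y$; taking orthogonal complements, this holds if and only if $T_x(L_x(y)) = (\mathrm{Im}\,A(x,y))^{\perp}$ is independent of $y$, which is exactly the assertion.

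For part (ii), the reverse implication is trivial from (i): if $L_x(y)$ is independent of $y$ then so is its tangent space, so $Y$ looks $b$-linear from every $x$. The forward implication is the substantive one. Applying (i) at every point shows that the distribution $D : x' \mapsto T_{x'}(L_{x'}(y)) =: D_{x'}$ is well defined independently of $y$. Fix $x$ and two products $y_0, y_1$. The level set $L_x(y_0)$ is a connected integral manifold of $D$, and I would examine the map $x' \mapsto D_yb(x', y_1)$ restricted to it: its differential at $x'$ is $A(x',y_1)^{T}$, whose kernel is $T_{x'}(L_{x'}(y_1)) = D_{x'}$, and $D_{x'}$ contains every tangent direction of $L_x(y_0)$. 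Hence this map has vanishing derivative along $L_x(y_0)$; since $L_x(y_0)$ is connected (the hypothesis of (ii)), it is constant, equal to its value $D_yb(x,y_1)$ at the base point $x$. By definition of $L_x(y_1)$ this gives $L_x(y_0) \subseteq L_x(y_1)$, and exchanging the roles of $y_0$ and $y_1$ yields the reverse inclusion, so $L_x(y_0) = L_x(y_1)$. As $x, y_0, y_1$ are arbitrary, $L_x(y)$ is independent of $y$.

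The main obstacle is the forward direction of (ii): one has only the infinitesimal (tangent-space) agreement supplied by (i) and must upgrade it to genuine equality of the level sets. The device that accomplishes this is to hold one level set fixed and show that the function cutting out the other is constant along it; this is precisely where connectedness of the $L_x(y)$ enters, since it is what converts the locally constant conclusion into a globally constant one.
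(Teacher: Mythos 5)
The proposal is correct and takes essentially the same route as the paper: both identify $T_x(L_x(y))$ as the kernel of $D^2_{yx}b(x,y)$, hence the orthogonal complement of the range of $D^2_{xy}b(x,y)$, which is in turn the tangent space to the image $D_xb(x,Y)$, so that constancy of the one subspace is equivalent to constancy of the other and (i) follows. Your argument for the forward direction of (ii) --- showing $x' \mapsto D_yb(x',y_1)$ has vanishing derivative along the connected leaf $L_x(y_0)$ and is therefore constant, giving $L_x(y_0) \subseteq L_x(y_1)$ and then equality by symmetry --- correctly supplies the details behind the equivalence that the paper asserts in a single line.
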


\begin{proof}
We first prove (i).  The tangent space $T_x(L_x(y))$ to $L_x(y)$ at $x$ is the null space of the matrix $D^2_{yx}b(x,y)$, which, in turn, is the orthogonal complement of the range of its transpose, $D^2_{xy}b(x,y)$.  Therefore, $T_x(L_x(y))$ is independent of $y$ if and only if the range of $D^2_{xy}b(x,y)$ is independent of $y$.   But $D^2_{xy}b(x,y)$ is the derivative of the map $y \mapsto D_xb(x,y)$, and so its range is independent of $y$ if and only if the image of this map is linear.

To see (ii), note that (i) implies $Y$ is $b$-linear with respect to $X$ if and only if $T_x(L_x(y_0))=T_x(L_x(y_1))$ for all $x \in X$  and all $y_0,y_1 \in Y$.  But $T_x(L_x(y_0))=T_x(L_x(y_1))$ for all $x$ is equivalent to $L_x(y_0)=L_x(y_1)$ for all $x$; this immediately yields (ii).
\end{proof}

Our goal is to identify conditions on $b$ under which $U_{b,\phi}$ is convex; Proposition \ref{connec} and Lemma \ref{linlev} imply that this cannot be the case unless the sets $L_x(y)$ are independent of $y$.  For the rest of this section, we will therefore assume that the sets $L_x(y)$ are in fact independent of $y$; we will henceforth denote them simply by $L_x$.  We will show next that, no matter what pricing schedule the principal chooses, consumers in the same $L_x$ will always choose the same good and so, at least for the purposes of this problem, different points in the same $L_x$ do not really represent different types.

\newtheorem{ccon}[lin]{Lemma}
\begin{ccon} \label{con}
Assume each $L_x(y)$ is connected and independent of $y$.  For any $x_0, x_1 \in X$ such that $x_0 \in L_{x_1}$, $\overline{y} \in Y$ and $b$-concave $v^b$ we have $v^b(x_0)=b(x_0, \overline{y})-v(\overline{y})$ if and only if $v(x_1)=b(x_1, \overline{y})-v(\overline{y})$. 
\end{ccon}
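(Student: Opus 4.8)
The plan is to reduce everything to one observation: under the hypotheses, the functions $y \mapsto b(x_0,y)$ and $y \mapsto b(x_1,y)$ differ by an additive constant on $Y$. Granting this, the two maps $y \mapsto b(x_0,y)-v(y)$ and $y \mapsto b(x_1,y)-v(y)$ also differ by that constant, hence share the same maximizers; the asserted equivalence --- that $\overline{y}$ attains the supremum defining $v^b(x_0)$ if and only if it attains the supremum defining $v^b(x_1)$ --- is then immediate and manifestly symmetric in $x_0$ and $x_1$.

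First I would unwind the containment $x_0 \in L_{x_1}$. Because the level sets are independent of $y$, we have $L_{x_1}=L_{x_1}(y)$ for every $y \in Y$, and by the very definition of $L_{x_1}(y)$ the statement $x_0 \in L_{x_1}(y)$ means exactly that $D_yb(x_0,y)=D_yb(x_1,y)$. Since this holds for all $y$, the $y$-gradient of the difference $y \mapsto b(x_0,y)-b(x_1,y)$ vanishes identically on $Y$. I would then integrate: joining the distinguished point $\overline{y}$ to an arbitrary $y \in Y$ by a path and applying the fundamental theorem of calculus yields $b(x_0,y)-b(x_1,y)=b(x_0,\overline{y})-b(x_1,\overline{y})=:C$, a constant independent of $y$, which extends to $\overline{Y}$ by continuity of $b$. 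Thus $b(x_1,y)-v(y)=\bigl(b(x_0,y)-v(y)\bigr)-C$ for every $y$, so $\argmax_{y}\bigl(b(x_0,y)-v(y)\bigr)=\argmax_{y}\bigl(b(x_1,y)-v(y)\bigr)$ and the two suprema differ exactly by $C$. In particular $v^b(x_0)=b(x_0,\overline{y})-v(\overline{y})$ holds precisely when $\overline{y}$ lies in this common argmax, which is precisely the condition $v^b(x_1)=b(x_1,\overline{y})-v(\overline{y})$.

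The one delicate point, which I expect to be the main (if mild) obstacle, is the integration step: passing from the infinitesimal identity $D_yb(x_0,\cdot)=D_yb(x_1,\cdot)$ to the global conclusion that $b(x_0,\cdot)-b(x_1,\cdot)$ is a single constant requires that $\overline{y}$ and $y$ can be joined within $Y$, i.e.\ that $Y$ is connected. This is harmless in the present setting: the $b$-convexity hypothesis that drives this section forces $Y$ to look $b$-linear, and together with the global injectivity of $y \mapsto D_xb(x,y)$ provided by \textbf{(B1)} this identifies $Y$ with a connected (indeed convex) subset of an $n$-dimensional subspace. I note that connectedness of the level sets $L_x$, although a standing hypothesis here (it is what lets us speak of $L_x$ independent of $y$ via Lemma \ref{linlev}), plays no further role in this particular argument, which rests entirely on the equality of the $y$-derivatives along the single level set containing $x_0$ and $x_1$.
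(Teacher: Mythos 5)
Your proof is correct and follows essentially the same route as the paper's: both rest on the single observation that $x_0\in L_{x_1}$, together with the $y$-independence of the level sets, forces $D_yb(x_0,\cdot)=D_yb(x_1,\cdot)$ on all of $Y$, so that $b(x_0,\cdot)-b(x_1,\cdot)$ is constant and the two objectives $y\mapsto b(x_i,y)-v(y)$ share their maximizers. The only differences are cosmetic --- the paper reaches the conclusion by an explicit add-and-subtract computation of $v^b(x_1)$ rather than by invoking a common argmax, and it leaves implicit the connectedness of $Y$ that you rightly flag as the hypothesis needed to integrate the vanishing derivative into a global constant.
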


\begin{proof}
 First note that as $D_yb(x_0,y)-D_yb(x_1,y)=0$ for all $y \in Y$, the difference $b(x_0,y)-b(x_1,y)$ is independent of $y$.  Now, suppose $v^b(x_0)=b(x_0, \overline{y})-v(\overline{y})$.  Then 
\begin{eqnarray*}
 v^b(x_1)&=&\inf_{y \in Y} b(x_1, y)-v(y) \\
&=&\inf_{y \in Y} \big( b(x_1, y)-b(x_0,y)+b(x_0,y)-v(y) \big)\\
&=& b(x_1, \overline{y})-b(x_0,\overline{y})+\inf_{y \in Y} \big( b(x_0,y)-v(y) \big)\\
&=& b(x_1, \overline{y})-b(x_0,\overline{y})+v^b(x_0) \\ 
&=& b(x_1, \overline{y})-v(\overline{y}) \\ 
\end{eqnarray*}
The proof of the converse is identical.
\end{proof}

We can now reformulate the monopolist's problem as a problem between two $n$-dimensional spaces.  To do this, we define an \textit{effective} space of types, by essentially identifying all consumer types in a single $L_x$ as a single \textit{effective} type.

Fix some $y_0 \in Y$ and define the space of \emph{effective} types $Z:=D_yb(X,y_0) \subseteq \mathbb{R}^n$ and the map $Q:X \rightarrow Z$ via $Q(x) := D_yb(x,y_0)$.  We define an \emph{effective} preference function: $h: Z \times Y \rightarrow \mathbb{R}$ via

\begin{eqnarray*}
h(z,y)=b(x,y)-b(x,y_0), 
\end{eqnarray*}
where $x \in Q^{-1}(z)$.  We must check that $h$ is well defined, that is 
\begin{equation*}
 b(x,y)-b(x,y_0)=b(\overline{x},y)-b(\overline{x},y_0), 
\end{equation*}
or equivalently 
\begin{equation*}
 B(x,\overline{x}, y, y_0):=b(x,y)-b(x,y_0)-b(\overline{x},y)+b(\overline{x},y_0)=0, 
\end{equation*}
for all $\overline{x} \in L_x$ and $y \in Y$.  This is easily verified; the identity clearly holds at $y=y_0$ and as $D_yB(x,\overline{x}, y, y_0)$ vanishes, it must hold for all $y$.

Given a price schedule $v(y)$, we define the corresponding \emph{effective} utility as,
\begin{eqnarray}
 v^h(z) & = & \sup_{y \in Y}h(z,y)-v(y) \nonumber\\
 & = &  \sup_{y \in Y}b(x,y)-b(x,y_0)-v(y)\nonumber\\
 & = & -b(x,y_0)+ \sup_{y \in Y}b(x,y)-v(y)\nonumber\\
& = & -b(x,y_0)+ v^b(x) \label{util}
\end{eqnarray}
for any $x \in Q^{-1}(z)$.  An effective consumer of type $z$ chooses the product at which this supremum is attained; we define this map to be $y_{v^h}(z)$.  It is clear from the preceding calculation that, for every $x \in Q^{-1}(z)$ we have $y_{v^b}(x)=y_{v^h}(z)$.  Define the distribution of effective consumer types to be the pushforward measure $\nu=Q_{\#}\mu$ .  Define the monopolist's effective profits to be 
\begin{equation*}
 P_{eff}(v^h)=\int_Z(h(z,y_{v^h}(z))-v^h(z)-c(y_{v^h}(z)))d\nu(z)
\end{equation*}

The next theorem implies that maximizing the monopolist's effective profits is equivalent to maximizing her profits. 

\newtheorem{eff}[lin]{Theorem}
\begin{eff}\label{eff}
For any pricing schedule, the monopolist's effective profits are equal to her profits.  
\end{eff}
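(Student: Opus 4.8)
The plan is to show that the integrand defining $P_{eff}$, when pulled back along the quotient map $Q$, coincides pointwise with the integrand defining $P$, and then to invoke the change-of-variables formula for the pushforward measure $\nu = Q_{\#}\mu$. All the ingredients needed for the first step have already been assembled in the calculations preceding the statement.

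First I would fix an arbitrary price schedule $v$ and an arbitrary $x \in X$, and set $z = Q(x)$, so that $x \in Q^{-1}(z)$. I would then evaluate the effective integrand $h(z,y_{v^h}(z)) - v^h(z) - c(y_{v^h}(z))$ at this $z$, substituting the three key identities established above: the definition $h(z,y) = b(x,y) - b(x,y_0)$, the relation $v^h(z) = -b(x,y_0) + v^b(x)$ from equation (\ref{util}), and the matching of optimizers $y_{v^h}(z) = y_{v^b}(x)$. Writing $y^* := y_{v^b}(x) = y_{v^h}(z)$, the effective integrand becomes
\begin{equation*}
\bigl(b(x,y^*) - b(x,y_0)\bigr) - \bigl(-b(x,y_0) + v^b(x)\bigr) - c(y^*),
\end{equation*}
in which the two $b(x,y_0)$ terms cancel, leaving exactly $b(x,y^*) - v^b(x) - c(y^*)$, which is precisely the original integrand evaluated at $x$. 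Thus if $g(z)$ denotes the effective integrand, then $g(Q(x))$ equals the profit integrand at $x$ for every $x \in X$.

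With this pointwise identity in hand, the second step is purely measure-theoretic: by the definition of the pushforward and the standard change-of-variables formula $\int_Z g \, d(Q_{\#}\mu) = \int_X (g \circ Q)\, d\mu$, I obtain
\begin{equation*}
P_{eff}(v^h) = \int_Z g(z)\, d\nu(z) = \int_X g(Q(x))\, d\mu(x) = \int_X \bigl(b(x,y_{v^b}(x)) - v^b(x) - c(y_{v^b}(x))\bigr)\, d\mu(x) = P(v^b),
\end{equation*}
which is the desired equality.

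I do not anticipate a genuine obstacle here, since the substantive work was done in establishing that $h$ is well defined, that $v^h = -b(\cdot,y_0) + v^b$ on fibers, and that the optimizers coincide along each $L_x$; the theorem is essentially a bookkeeping consequence of those facts. The only point requiring a modicum of care is measurability of $g$ and of the selection $y_{v^h}$, so that the change-of-variables formula applies; this is routine and can be handled by noting that $v^h$ is $h$-convex (hence continuous) and that the optimizer selection may be taken measurable, exactly as for $y_{v^b}$ in the original formulation.
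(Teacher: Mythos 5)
Your proposal is correct and is essentially the paper's own proof run in the opposite direction: the paper starts from $P(v^b)$, substitutes $y_{v^b}(x)=y_{v^h}(Q(x))$ and $v^b(x)=v^h(Q(x))+b(x,y_0)$ via equation (\ref{util}), collapses $b(x,y)-b(x,y_0)$ into $h(Q(x),y)$, and then pushes the integral forward to $Z$, which is exactly your pointwise cancellation plus the change-of-variables identity $\int_Z g\, d(Q_{\#}\mu)=\int_X (g\circ Q)\, d\mu$. No substantive difference; your added remark on measurability of the selection is a reasonable point of care the paper leaves implicit.
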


\begin{proof}
\begin{eqnarray*}
 P(v^b)&=&\int_X(b(x,y_{v^b}(x))-v^b(x)-c(y_{v^b}(x)))d\mu(x)\\
&=& \int_X(b(x,y_{v^h}(Q(x)))-v^h(Q(x))-b(x,y_0)-c(y_{v^h}(Q(x))))d\mu(x)\\
&=&\int_X(h(Q(x),y_{v^h}(Q(x)))-v^h(Q(x))-c(y_{v^h}(Q(x))))d\mu(x)\\
&=&\int_Z(h(z,y_{v^h}(z))-v^h(z)-c(y_{v^h}(z)))d\nu(x)\\
&=& P_{eff}(v^h)\\
\end{eqnarray*}
\end{proof}

The following corollary now follows immediately.
\newtheorem{class}[lin]{Corollary}
\begin{class}  Assume $\textbf{(B0)}$ and  $\textbf{(B1)}$.  Then one of the following holds.
\begin{enumerate}
\item $U_{b, \phi}$ is not convex.
\item The space of types can be reduced to an $n$-dimensional space of effective types, and solving the monopolist's problem with this effective space of types is equivalent to solving her original problem.
\end{enumerate}
\end{class}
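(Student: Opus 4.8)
The plan is to prove the corollary as a clean dichotomy governed by a single question: whether $\overline{Y}$ is $b$-convex at \emph{every} point of $X$. These two cases are exhaustive and yield alternatives 1 and 2 respectively, so it suffices to treat each in turn. If $\overline{Y}$ fails to be $b$-convex at some $x \in X$, then Proposition \ref{connec} applies verbatim and forces $U_{b,\phi}$ to be non-convex, which is precisely alternative 1; this case needs no further argument.

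The substantive case is when $\overline{Y}$ is $b$-convex at every $x \in X$, and here I would simply assemble the machinery developed earlier in the section. Since $b$-convexity of $\overline{Y}$ at $x$ forces $D_xb(x,Y)$ to lie in a shifted $n$-dimensional subspace, $Y$ looks $b$-linear from $x$; as this holds at each $x$, the domain $Y$ is $b$-linear with respect to $X$. Condition \textbf{(B1)} guarantees that the level sets $L_x(y)$ are connected, so Lemma \ref{linlev}(ii) converts $b$-linearity into the statement that $L_x(y)$ is independent of $y$ --- exactly the standing hypothesis under which the effective-space construction was carried out. With this in hand, the effective type space $Z = D_yb(X,y_0)$, the map $Q$, and the effective preference $h$ are all well defined, and the full-rank clause of \textbf{(B1)} makes $Q$ a submersion, so that $Z$ is genuinely $n$-dimensional. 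Lemma \ref{con} then certifies that consumers sharing a common $L_x$ always select the same good, so no economic information is lost in passing to $Z$, while Theorem \ref{eff} gives $P(v^b) = P_{eff}(v^h)$ for every pricing schedule. Hence the effective problem is equivalent to the original one, which is alternative 2.

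I expect no genuine obstacle here: the corollary is an assembly of prior results, and its only real content is recognizing that $b$-convexity of $Y$ is the correct trigger for the case split. The single point demanding care is to confirm that the effective-space construction, which the section develops under the blanket assumption that $L_x(y)$ is independent of $y$, is actually \emph{licensed} in the second case --- and this is precisely what Lemma \ref{linlev}(ii), combined with the connectedness of the level sets furnished by \textbf{(B1)}, supplies. Once that link is made explicit, the chain from $b$-convexity of $\overline{Y}$, through $b$-linearity, to $y$-independence of $L_x(y)$ is immediate, and the two alternatives follow.
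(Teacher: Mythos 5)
Your proposal is correct and follows essentially the same route as the paper, which states that the corollary ``follows immediately'' from the preceding results: the dichotomy on whether $\overline{Y}$ is $b$-convex at every $x$, with Proposition \ref{connec} handling the failure case and the chain $b$-convexity $\Rightarrow$ $b$-linearity $\Rightarrow$ (via Lemma \ref{linlev}(ii) and the connectedness from \textbf{(B1)}) $y$-independence of $L_x(y)$ $\Rightarrow$ the effective-space reduction of Theorem \ref{eff} handling the other. Your explicit remark that Lemma \ref{linlev}(ii) is what licenses the section's standing assumption is exactly the link the paper leaves implicit.
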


According to Figalli, Kim and McCann \cite{fkm}, this new, equal dimensional problem is a maximization over a convex set, provided that the conditions \textbf{(B0)}-\textbf{(B3)} hold for $h$, $Z$ and $Y$; meanwhile, to ensure strict convexity of the functional $P_{eff}$ and uniqueness of the optimizer, one needs either \textbf{(B3u)} and the $h$-convexity of $g$ or the strict $h$-convexity of $g$.  It is therefore desirable to be able to test these properties using only the information present in the original problem; that is, using $b$ and $X$ rather than $h$ and $Z$.

 Before proceeding, we make a useful (local) identification between $Z$ and certain subsets of $X$, following the analysis in \cite{p}.  Pick a point $z_0 \in Z$ and select $x_0 \in Q^{-1}(z_0)$.  Now, let $S$ be an $n$-dimensional surface passing though $x_0$ which intersects $L_{x_0}$ transversely; note that this implies $\mathbb{R}^m=T_xL_{x_0} \bigoplus T_xS$.  As the null space of the matrix $D^2_{yx}b(x_0,y_0)$ is precisely  $T_xL_{x_0}$ for any $y$, it is invertible when restricted to $T_{x_0}S$; by the inverse function theorem, the map $Q(\cdot)=D_yb(\cdot,y_0)$ restricts to a local diffeomorphism on $S$.  For all $z$ near $z_0$, there is a unique $x \in S \cap Q^{-1}(z)$ and we have $h(z,y)=b(x,y)-b(x,y_0)$; we can now identify $D_zh(z,y) \approx D_xb|_{S \times Y}(x,y)-D_xb|_{S \times Y}(x,y_0)$ and $D_{zy}^2h(z,y) \approx D_{xy}^2b|_{S \times Y}(x,y)$.  

We are now ready to prove the following theorem.

\newtheorem{prop}[lin]{Theorem}
\begin{prop}\label{prop}
(i) If $b$ satisfies \textbf{(B1)} on $X \times Y$, then $h$ satisfies \textbf{(B1)} on $Z \times Y$.\\
(ii) If $b$ satisfies \textbf{(B2)} on $X \times Y$, then $h$ satisfies \textbf{(B2)} on $Z \times Y$.
\end{prop}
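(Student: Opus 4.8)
The plan is to push every statement through the local identification set up immediately before the theorem. Fix $z_0 \in Z$, a base point $x_0 \in S \cap Q^{-1}(z_0)$, and recall that $Q|_S$ is a local diffeomorphism, so $A := D(Q|_S)^{-1}(z_0)\colon \mathbb{R}^n \to T_{x_0}S$ is a fixed linear isomorphism. Differentiating $h(z,y) = b(x(z),y) - b(x(z),y_0)$ with $x(z) = (Q|_S)^{-1}(z)$ and the chain rule gives, at $z = z_0$, the exact relations $D_zh(z_0,y) = A^{*}\bigl(D_xb|_S(x_0,y) - D_xb|_S(x_0,y_0)\bigr)$ and $D^2_{zy}h(z_0,y_0) = A^{*}\,D^2_{xy}b|_S(x_0,y_0)$, where $A^{*}$ is the (invertible) adjoint and $D_xb|_S$ denotes the restriction of the covector $D_xb$ to $T_{x_0}S$. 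The whole point is that the ``$\approx$'' of the paper is an honest equality up to the fixed invertible map $A^{*}$, and invertible linear maps preserve all three properties we must verify: full rank, connectedness of level sets, and convexity of images. Two standing facts will be used repeatedly: $T_{x_0}L_{x_0}$ is the kernel of $D^2_{yx}b(x_0,y)$ (independent of $y$, since $L_x$ is), its orthogonal complement $R := \operatorname{range} D^2_{xy}b(x_0,y)$ is therefore also independent of $y$ and $D_xb(x_0,\overline{Y})$ lies in a single affine subspace $V$ parallel to $R$; transversality of $S$ gives $\mathbb{R}^m = T_{x_0}L_{x_0}\oplus T_{x_0}S$.

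For part (i) there are three things to check. The full-rank claim follows because transversality forces $D^2_{yx}b(x_0,y_0)$ to be injective on $T_{x_0}S$, hence an isomorphism onto $\mathbb{R}^n$; composing with the isomorphism $A^{*}$ keeps $D^2_{zy}h(z_0,y_0)$ of full rank. For the level sets of $z \mapsto D_yh(z,y_0)$, I would simply observe that $D_yh(z,y_0) = D_yb(x,y_0) = Q(x) = z$, so this map is the identity on $Z$ and its level sets are singletons. The level sets of $y \mapsto D_zh(z_0,y)$ are the delicate case: by the displayed formula they coincide with the level sets of $y \mapsto \rho\bigl(D_xb(x_0,y)\bigr)$, where $\rho$ is restriction to $T_{x_0}S$. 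I would show $\rho$ is injective on $V$ because $\ker\rho = (T_{x_0}S)^{\perp}$ and $(T_{x_0}S)^{\perp}\cap R = \bigl(T_{x_0}S + T_{x_0}L_{x_0}\bigr)^{\perp} = \{0\}$ by transversality; hence these level sets agree exactly with the level sets of $y \mapsto D_xb(x_0,y)$, which are connected by \textbf{(B1)} for $b$.

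Part (ii) is then quick. For $h$-convexity of $\overline{Z}$, the identity $D_yh(z,y_0) = z$ gives $D_yh(\overline{Z},y_0) = \overline{Z} = D_yb(\overline{X},y_0)$ (using continuity and compactness to identify the closure), which is convex because $\overline{X}$ is $b$-convex. For $h$-convexity of $\overline{Y}$, the displayed formula exhibits $D_zh(z_0,\overline{Y})$ as the image of the convex set $D_xb(x_0,\overline{Y})$ (convex since $\overline{Y}$ is $b$-convex) under the affine map $w \mapsto A^{*}\rho(w - D_xb(x_0,y_0))$; a linear image of a convex set is convex, so no injectivity is needed here.

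The main obstacle is the level-set computation in part (i): everything hinges on the restriction map $\rho$ neither merging nor splitting the fibers of $y \mapsto D_xb(x_0,y)$, and this is exactly where the construction of $S$, the standing hypothesis that $L_x$ is independent of $y$ (so that the target affine subspace $V$ does not move with $y$), and the full-rank part of \textbf{(B1)} must be combined. The remaining care is bookkeeping: verifying that the chain-rule factor $A^{*}$ is a genuine isomorphism at the base point and confirming that it commutes with the three properties in question, none of which is disturbed by a fixed invertible linear change of coordinates.
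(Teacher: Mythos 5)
Your proposal is correct and follows essentially the same route as the paper: both push \textbf{(B1)} and \textbf{(B2)} through the local identification of $Z$ with a transversal surface $S$, with your explicit isomorphism $A^{*}$ and the computation $(T_{x_0}S)^{\perp}\cap R=\{0\}$ merely making precise what the paper compresses into ``the identification above.'' One small point of bookkeeping: you verify injectivity of $z\mapsto D_yh(z,\cdot)$ and convexity of $D_yh(\overline{Z},\cdot)$ only at the base point $y_0$ used to define $Q$, whereas \textbf{(B1)}--\textbf{(B2)} require these for every $y'\in\overline{Y}$; the fix is immediate from your own setup, since $D_yh(z,y')=D_yb(x,y')$ for any $x\in Q^{-1}(z)$ and the fibers of $Q$ coincide with the level sets $L_x(y')$ for every $y'$ by the standing assumption that $L_x(y)$ is independent of $y$.
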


\begin{proof}
 For (i), note that the hypotheses about the connectivity of level sets is equivalent to injectivity of the relevant maps for the equal dimensional spaces $Z$ and $Y$.  The injectivity of $z \mapsto D_yh(z,y)$ is an immediate consequence of the definition of $h$, while the injectivity of $y \mapsto D_zh(z,y)$ follows from the injectivity of $y \mapsto D_xb(x,y)$ and the identification above.  The full rank of the matrix $D^2_{zy}h$ follows easily from our identification, as $D^2_{xy}b$ has full rank when restricted to a transversal sub-manifold.

Half of (ii) is immediate, as $D_yh(Z,y)=D_yb(X,y)$.  To obtain the convexity of $D_zh(z,Y)$, we use the identification above; note that $D_zh(z,Y)$ is the projection of $D_xb(x,Y)$ onto the tangent space of $S$ and the projection of a convex set is convex.
\end{proof}

\newtheorem{b3}[lin]{Theorem}

\begin{b3}\label{b3} Assume that $b$ satisfies \textbf{(B0)}-\textbf{(B2)}.  Then\\
(i) $b$ satisfies \textbf{(B3)} on $X \times Y$ if and only if $h$ satisfies \textbf{(B3)} on $Z \times Y$.\\ 
(ii) $b$ satisfies \textbf{(B3u)} on $X \times Y$ if and only if $h$ satisfies \textbf{(B3u)} on $Z \times Y$. 
\end{b3}
\begin{proof}
We first prove (i).  Assuming the \textbf{(B3)} condition on $b$ , we have that $\frac{\partial^4}{\partial t^2 \partial s^2}b(x(s), y(t))\Big|_{(s,t)=(0,0)}\geq 0$ for appropriately chosen paths $x(s)$ and $y(t)$, assuming $x(s) \in S$.  Using the identification above, this implies that $h$ satisfies \textbf{(B3)}.  

Conversely, suppose now that $h$ satisfies \textbf{(B3)}.  Choose curves $x(s)$ in $X$ and $y(t)$ in $Y$ as in the definition of \textbf{(B3)}.  If $\dot{x}(0) \notin T_xL_{x(0)}$, then we can choose some surface $n$-dimensional surface $S$, intersecting $L_{x(0)}$ transversely, such that $x(s) \in S$ for small $s$.  In this case, using the local identification above, \textbf{(B3)} on $h$ implies

\begin{equation*}
\frac{\partial^4}{\partial t^2 \partial s^2}b(x(s), y(t))\Big|_{(s,t)=(0,0)}\geq 0,
\end{equation*} 

On the otherhand, suppose $\dot{x}(0) \in T_xL_{x(0)}$.  It is well known (see \cite{loeper} \cite{km} and \cite{fkm}) that as long as $D_xb(x(0), y(t))$ is an afinely parameterized line segment, we have 

\begin{equation}\label{newcurve} 
\frac{\partial^4}{\partial t^2 \partial s^2}b(x(s), y(t))\Big|_{(s,t)=(0,0)} = \frac{\partial^4}{\partial t^2 \partial s^2}b(\tilde{x}(s), y(t))
\end{equation}
for any other curve $\tilde{x}(s)$ in $X$ satisfying $\tilde{x}(0) = x(0)$ and $\dot{\tilde{x}}(0) = \dot{x}(0)$.  In particular, we may choose a curve $\tilde{x}(s)$ in $L_{x(0)}$ with these properties.  We will show that

\begin{equation*}
\frac{\partial^4}{\partial t^2 \partial s^2}b(\tilde{x}(s), y(t))\Big|_{(s,t)=(0,0)} = 0
\end{equation*}   
which, by (\ref{newcurve}), will imply the desired result.

Now, for all $s$ and $t$ we have 

\begin{equation*}
\frac{d \tilde{x}}{ds}(s) \in T_{\tilde{x}(s)}L_{\tilde{x}(s)}=\text{null}\big(D^2_{xy}b(\tilde{x}(s),y(t))\big)
\end{equation*}
and so 
\begin{equation*} 
  \frac{d^2}{dsdt}b(\tilde{x}(s),y(t))=\Big(\frac{d\tilde{x}(s)}{ds}\Big)^T\cdot D^2_{xy}b\big(\tilde{x}(s),y(t))\big) \cdot \frac{dy(t)}{dt}=0
 \end{equation*}
As this holds for all $s$ and $t$, we clearly have 

\begin{equation*}
\frac{\partial^4}{\partial t^2 \partial s^2}b(\tilde{x}(s), y(t))\Big|_{(s,t)=(0,0)} = 0
\end{equation*}

Turning now to (ii), if $b$ satisfies \textbf{(B3u)}, then so does $h$, using the identification exactly as in the argument for \textbf{(B3)}.  Converseley, suppose $h$ satisfies \textbf{(B3u)}.  For any curves $x(s)$ and $y(t)$ such that $\dot{x}(0)^T\cdot D^{2}_{xy}b(x_0,y_0) \neq 0$, we must have $\dot{x}(0) \notin T_xL_x$, as  $T_{x(0)}L_{x(0)}=\text{null}\big(D^2_{xy}c(x(s),y(0))\big)$.  Then, preceding as above, we can find some surface $S$, transversal to $L_{x(0)}$ such that $x(s) \in S$ for small $s$ and use the local identification to verify \textbf{(B3u)}.

\end{proof}
Theorems \ref{prop} and \ref{b3}, together with the main result in \cite{fkm} immediately imply the following result.  It generalizes the main result of \cite{fkm} to the $m \geq n$ setting, although, as we have shown in this section, when $m >n$ the hypothesis \textbf{(B2)} essentially reduces the problem to the $m=n$ case.

\newtheorem{b3con}[lin]{Corollary}
\begin{b3con}\label{b3con}
Assume $b$ satisfies \textbf{(B0)}-\textbf{(B2)}.  The $U_{b,\phi}$ is convex if and only if $b$ satisifes \textbf{(B3)}. 
\end{b3con}

Figalli, Kim and McCann also proved the concavity of the functional $P$ provided that the cost function $c$ is $b$ convex \cite{fkm}.   In addition, recall that a $b$-convex function $c$ is called strictly $b$-convex if the function $y_{c^b}(x)=\argmax_{y \in \overline{Y}} b(x,y)-v(y)$ is continuous.  Figalli, Kim and McCann showed that $P$ is strictly concave (a sufficient condition for the uniqueness of the optimal pricing strategy) provided either (i)\textbf{(B3)} holds and $c$ is strictly $b$-convex or (ii) \textbf{(B3u)} holds.  We show below that the (strict) $b$-convexity of $c$ is equivalent to its (strict) $h$-convexity.

\newtheorem{screenbcon}[lin]{Proposition}
\begin{screenbcon}\label{screencon}
Assume $b$ satisfies \textbf{(B0)} - \textbf{(B2)}.  Then \\
(i)$c$ is $b$-convex if and only if it is $h$-convex.  \\
(ii)$c$ is strictly  $b$-convex if and only if it is strictly $h$-convex.
\end{screenbcon}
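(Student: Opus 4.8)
The plan is to reduce both parts to the algebraic relation between the $b$-transform and the $h$-transform already recorded in equation (\ref{util}), which applied to the cost reads $c^h(Q(x)) = c^b(x) - b(x,y_0)$ for every $x \in Q^{-1}(z)$. This single identity, together with the surjectivity of $Q$ onto $Z$, does essentially all the work; the only structural input beyond bookkeeping is the local inversion of $Q$ furnished by condition \textbf{(B1)}.

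For (i), I would apply the $b$-transform a second time. Computing $(c^b)^b(y) = \sup_{x \in X} b(x,y) - c^b(x)$ and substituting both $b(x,y) = h(Q(x),y) + b(x,y_0)$ and the displayed relation for $c^b$, the two occurrences of $b(x,y_0)$ cancel, leaving $\sup_{x \in X} h(Q(x),y) - c^h(Q(x))$; since $Q$ is surjective onto $Z$, this supremum over $x$ collapses to a supremum over $z \in Z$, so that $(c^b)^b = (c^h)^h$. Because a function on $\overline{Y}$ is $b$-convex (respectively $h$-convex) precisely when it equals its own double $b$-transform (respectively double $h$-transform), the identity $(c^b)^b = (c^h)^h$ yields (i) immediately.

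For (ii), the key observation is that for a fixed $x$ with $z = Q(x)$ the two functions $y \mapsto b(x,y) - c(y)$ and $y \mapsto h(z,y) - c(y)$ differ only by the $y$-independent constant $b(x,y_0)$; hence they have identical $\argmax$ sets, giving $y_{c^b}(x) = y_{c^h}(Q(x))$, exactly as already noted following equation (\ref{util}). Single-valuedness therefore transfers both ways: if $y_{c^b}$ is single-valued everywhere then so is $y_{c^h}$, since every $z \in Z$ equals $Q(x)$ for some $x$ by surjectivity, and the converse is immediate. Continuity in one direction is also trivial: if $y_{c^h}$ is continuous then $y_{c^b} = y_{c^h} \circ Q$ is continuous as a composition with the smooth map $Q$.

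The converse continuity statement---that continuity of $y_{c^b}$ forces continuity of $y_{c^h}$---is the only genuinely delicate point, since $Q$ is not injective and admits no global inverse. Here I would invoke the local identification constructed just before Theorem \ref{prop}: given $z_0 \in Z$, choose $x_0 \in Q^{-1}(z_0)$ and an $n$-dimensional surface $S$ through $x_0$ transverse to $L_{x_0}$, so that $Q|_S$ is a local diffeomorphism with a smooth local inverse $\sigma$ defined near $z_0$. For $z$ near $z_0$ one then has $y_{c^h}(z) = y_{c^b}(\sigma(z))$, continuous as a composition; since continuity is local and $z_0$ is arbitrary, $y_{c^h}$ is continuous on all of $Z$. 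Combining the single-valuedness and continuity equivalences gives (ii). I expect this last local-inversion step to be the main obstacle, as it is the only place where \textbf{(B1)}---through the transversality that makes $Q|_S$ a diffeomorphism---is used in an essential way rather than as routine algebra.
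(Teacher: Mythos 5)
Your proposal is correct and follows essentially the same route as the paper: part (i) is the identity $\sup_{x\in X} b(x,y)-c^b(x)=\sup_{z\in Z}h(z,y)-c^h(z)$ obtained from equation (\ref{util}) and the surjectivity of $Q$, and part (ii) rests on $y_{c^b}=y_{c^h}\circ Q$. The only (minor) divergence is in transferring continuity from $y_{c^b}$ to $y_{c^h}$: the paper invokes the openness of the surjection $Q$ (the quotient-map argument), whereas you build local continuous sections of $Q$ via the transversal surface $S$; both are valid and rest on the same full-rank consequence of \textbf{(B1)}.
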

\begin{proof}
 $b$-convexity of $c$ means that $c(y) = \sup_{x \in X} b(x,y)-c^b(x)$; it's $h$ convexity means that $c(y) =\sup_{z \in Z}h(z,y)-c^h(z)$.  Therefore, to prove (i), it suffices to show $\sup_{x \in X} b(x,y)-c^b(x) = \sup_{z \in Z}h(z,y)-c^h(z)$.  we have
\begin{eqnarray*}
\sup_{x \in X} b(x,y)-c^b(x)&=&\sup_{x \in X}b(x,y)-b(x,y_0)+b(x,y_0)-c^b(x)\\
&=&\sup_{x \in X}h(Q(x),y)-c^h(Q(x)), \text{ by (\ref{util})}\\
&=&\sup_{z \in Z}h(z,y)-c^h(z),\\
\end{eqnarray*}
as desired.  Turning to (ii), as $h(z,y)-c(y)= b(x,y) -b(x,y_0)-c(y)$ for $z=Q(x)$, then clearly $y_{c^b}(x) = y_{c^h}(Q(x))$.  Now $Q$ is continuous and surjective; in addition, it is straightforward to show $Q$ is an open mapping.  Therefore, continuity of  $y_{c^h}$ is equivalent to that of $y_{c^b}$. 
\end{proof}
\newtheorem{unique}[lin]{Corollary}

\begin{unique}\label{unique}
Assume $b$ satisfies $\textbf{(B0)}-\textbf{(B3)}$, that $c$ is $b$-convex and that the measure $\mu$ is absolutely continuous with respect to Lebesgue.  Then $P$ is concave.  If either (i) $b$ also satisifes  \textbf{(B3u)} or (ii) $c$ is strictly $b$-convex, then $P$ is strictly concave and its maximizer, the monopolist's optimal pricing plan, is unique.
\end{unique}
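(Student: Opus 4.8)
The plan is to transfer everything to the effective equal-dimensional problem of Theorem~\ref{eff} and then invoke the results of Figalli, Kim and McCann for $h$, $Z$ and $Y$. First I would record that the correspondence $v^b \mapsto v^h$ determined by (\ref{util}), namely $v^h(Q(x)) = v^b(x) - b(x,y_0)$, is an affine bijection between $U_{b,\phi}$ and the corresponding set of effective utilities: it is injective because $v^b(x) = v^h(Q(x)) + b(x,y_0)$ recovers $v^b$, and it is affine because the subtracted term $b(x,y_0)$ depends only on $x$. Consequently, for $v^b_0, v^b_1 \in U_{b,\phi}$ and $\lambda \in [0,1]$ the interpolant $(1-\lambda)v^b_0 + \lambda v^b_1$, which lies in $U_{b,\phi}$ by Corollary~\ref{b3con}, is carried to $(1-\lambda)v^h_0 + \lambda v^h_1$. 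Since $P(v^b) = P_{eff}(v^h)$ by Theorem~\ref{eff}, the functional $P$ is (strictly) concave on $U_{b,\phi}$ if and only if $P_{eff}$ is (strictly) concave on the effective set, and the bijection carries the unique maximizer of one to the unique maximizer of the other.

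It then remains to verify that the effective data $h,Z,Y,c,\nu$ satisfy the hypotheses of the Figalli--Kim--McCann concavity and uniqueness theorems, which are stated for equal dimensions. Conditions \textbf{(B0)}--\textbf{(B2)} for $h$ on $Z\times Y$ follow from Theorem~\ref{prop}, together with the $C^4$ regularity of $h$ inherited from $b$ through the local diffeomorphism of the identification preceding Theorem~\ref{prop}; condition \textbf{(B3)} follows from Theorem~\ref{b3}(i); and the $h$-convexity of $c$ follows from Proposition~\ref{screencon}(i). Granting these, the main result of \cite{fkm} yields concavity of $P_{eff}$, hence of $P$. For strict concavity, under hypothesis (i) I would use Theorem~\ref{b3}(ii) to obtain \textbf{(B3u)} for $h$, and under hypothesis (ii) I would use Proposition~\ref{screencon}(ii) to obtain strict $h$-convexity of $c$; in either case \cite{fkm} gives strict concavity of $P_{eff}$, which transfers to $P$ along the bijection and forces uniqueness of the maximizer over the convex set $U_{b,\phi}$.

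The one ingredient not supplied by the earlier results is the absolute continuity of the effective distribution $\nu = Q_\#\mu$, which the Figalli--Kim--McCann theorems require, since it guarantees that the argmax $y_{v^h}$ is $\nu$-a.e.\ single-valued and that $P_{eff}$ is well defined; this is the point I expect to be the main obstacle. I would argue it directly from the coarea formula. The map $Q(\cdot)=D_yb(\cdot,y_0)$ is a submersion, since $D^2_{yx}b$ has rank $n$ by \textbf{(B1)}, so on each compact subset of $X$ its $n$-dimensional Jacobian $\sqrt{\det(DQ\,DQ^T)}$ is bounded below by a positive constant. Hence if $A \subseteq Z$ satisfies $\mathcal{L}^n(A)=0$, the coarea formula gives $\mathcal{L}^m(Q^{-1}(A)) = 0$, and since $\mu \ll \mathcal{L}^m$ we conclude $(Q_\#\mu)(A) = \mu(Q^{-1}(A)) = 0$, so that $\nu \ll \mathcal{L}^n$. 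With this in hand every hypothesis of \cite{fkm} is met, and the corollary follows; everything beyond the absolute-continuity transfer is simply the assembly of the equivalences already established in this section.
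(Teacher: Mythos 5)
Your proposal is correct and follows essentially the same route as the paper: the paper's proof likewise reduces everything to the effective equal-dimensional problem via the results of Section 4, invokes Figalli--Kim--McCann, and observes that the only new ingredient is the absolute continuity of $\nu = Q_{\#}\mu$, which it also obtains from the coarea formula. Your write-up simply fills in details (the affine correspondence $v^b \mapsto v^h$ and the Jacobian lower bound for the submersion $Q$) that the paper leaves implicit or delegates to \cite{p}.
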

\begin{proof}
By the result in \cite{fkm}, we only need to verify that $\nu = Q_{\#}\mu$ is absolutely continuous with respect to $n$-dimensional Lebesgue measure.  This follows from the coarea formula (see \cite{p} for more details on the measure $\nu)$.
\end{proof}
Under these conditions, other economic phenomena such as bunching and the desirability of exclusion can be studied as in \cite{fkm}.  In particular, let us say a few words about bunching, or the phenomenon that sees different types choose the same good.  Of course, in this setting one naturally expects bunching because, as was noted by Basov \cite{bas}, the $m>n$ condition precludes the full separation of types.  When $X$ is $b$-convex, the bunching that occurs as a result of the difference in dimensions corresponds to identifying all types in a single level set $L_x$.  The results of this section imply that these are not genuinely different types; that is, that they can be treated as a single type $z$ without any loss of pertinent information.  However, genuine bunching occurs when types in different level sets opt for the same good;  Figalli, Kim and McCann conjecture this occurs under \textbf{(B3)} \cite{fkm}.
\newtheorem{rem}[lin]{Remark}
\begin{rem}
 In light of the previous section, these results mean that the monopolist's problem cannot be reduced to a maximization over a convex set when $m>n$ (at least as long as the extra dimensions encode real, economic information); this means that this class of problems is especially daunting.  However, in certain special cases these problems can be treated without relying on convexity.  Basov, for example, treats the case where $Y$ is a convex graph over $n$ variables embedded in $\mathbb{R}^m$ and $b(x,y)=x \cdot y$ \cite{bas}.  He then uses the techniques of Rochet and Chone \cite{rc} to solve the monopolist's problem in the epigraph (a convex, $n$-dimensional set) and shows that it is actually optimal to sell each consumer a product in the original graph.  The case $(m,n)=(2,1)$ with a general preference function is treated by Deneckere and Severinov \cite{ds}, again in the absence of a $b$-convex space of products.
\end{rem}

\section{$n>m$}

When $n>m$, the generalized Spence Mirlees condition cannot hold; that is, $y \mapsto D_xb(x,y)$ cannot be injective.  Therefore, when faced with a pricing schedule, a consumer's utility will typically be maximized by a continuum of products.  The principal has the ability to offer only the good which will maximize her profits from that consumer; however, in doing so, she may exclude products that maximize her profits from another consumer.  One way around this difficulty is to assume a tie-breaking rule as in Buttazzo and Carlier \cite{bc}; that is, assume that the principal can persuade each consumer to select the product that maximizes her profits (among those which maximize that consumer's utility function).  This is in fact inherent in Carlier's formulation of the problem and proof of existence \cite{car}. \footnote{In \cite{car}, no extended Spence Mirrlees condition is assumed and so the function $y_{v^b}(x) \in \argmax_{y \in \overline{Y}} b(x,y)-v(y)$ need not be uniquely determined by the $b$-convex $v^b(x)$.  If $v^b$ and $y_{v^b}$ maximize $P$, then $y_{v^b}$ must satisfy a tie-breaking rule;  that is, $y_{v^b}(x)$ must be chosen among elements in $\argmax_{y \in\overline{Y}} b(x,y)-v(y)$ so as to maximize the profits $v(y_{v^b}(x))-c(y_{v^b}(x))$.}

As we show in this section, this difficulty can be avoided by assuming $b$-convexity of $X$.  Much like in the last section, this condition will allow us to reduce to a problem where the dimensions of the two spaces are the same.  Intuitively, given a price schedule $v(y)$, a consumer $x$ will see the space of goods disintegrate into sub-manifolds.  If the price schedule is $b$-convex, then the consumer's preference $b(x,y)-v(y)$ for good $y$ will be maximized at every point in (at least) one of these submanifolds.  The $b$-convexity of $X$ will ensure that this disintegration will be the same for each $x \in X$.  The principal can then choose to offer only the good in each of these submanifolds which will maximize her profits from consumers whose utilities are maximized on that sub-manifold; the resulting space will be $m$ dimensional.  A special case of this structure was exploited by Basov to prove a similar result for bilinear preference functions \cite{bas}.

The motivation behind the $b$-convexity of $X$ is not as clear the motivation behind as the $b$-convexity of $Y$, which we saw in section 3 was necessary for the convexity of $U_b$.  It is, however, present in the work of Rochet and Chone \cite {rc} and Basov \cite{bas} on bilinear preference functions (where it reduces to ordinary convexity) as well as that of Figalli, Kim and McCann \cite{fkm}.  In the latter work, it is noted that $b$-convexity of $X$ implies ordinary convexity after a change of coordinates, which is essential in their proof of the genericity of exclusion modelled on the work of Armstrong \cite{arm}.

Using the method from the previous section, we note that if $X$ is $b$-convex, and the level sets 
\begin{equation*}
L_y(x):=\{\overline{y} \in X: D_xb(\overline{y},x)=D_xb(x,y)\}
\end{equation*}
 are all connected, then they are independent of $x$; in this case, we will denote them simply by $L_y$.  Define the effective space of profits, $W$, to be the image of $y \mapsto D_xb(x_0,y):=Q(y)$, for some fixed $x_0$.  The effective preference function, defined by $h(x,w)=b(x,y)-b(x_0,y)$, for $y \in Q^{-1}(w)$, is well defined.  We then define a new, effective cost function by
\begin{equation*}
g(w) = \inf_{\{y: Q(y)=w\}}c(y)-b(x_0, y).
\end{equation*}

Our aim is to show that the maximizing the monopolist's proftis is equivalent to maximizing her effective profits; that is, the profits she could earn by selling the set $W$ of effective goods with production costs $g$ to consumers with effective preference function $h$.  In practice, of course, she has access to the real goods $Y$.  However, offering a good $y$ that minimizes $c(y)-b(x_0, y)$ over $Q^{-1}(w)$ for some $w$, to buyers with preference function $b(x,y)$ is equivalent to offering the effective good $w$ to buyers with preference function $h(x,w)$.  The following result implies that, in order to maximize her profits, it is enough to offer \emph{only} goods $y$ with this property.  Denote by $M_w =\argmin_{y: Q(y)=w}c(y) -b(x_0,y)$.

\newtheorem{newC}{Proposition}[section]
\begin{newC}.  
 Given a price schedule $v(y)$ and corresponding utility $v^b(x)$.  Define $\tilde{v}(y)$ by 
 \begin{eqnarray}\label{best}
 \tilde{v}(y) &=& (v^b)^b(y) = \sup_{x \in \overline{X}}b(x,y)-v^b(x),  \text{ if $y \in M_w$ for some $w$.}\\  
 &=& \infty,  \text{ otherwise.} \nonumber
 \end{eqnarray}
   Then, $\tilde{v}^b(x) = v^b(x)$ and, for almost all $x$, and any $y \in \argmax_{y \in \overline{Y}} b(x,y)-v(y)$, and $\tilde{y} \in \argmax_{y \in \overline{Y}} b(x,y)-\tilde{v}(y)$ we have
   $v(y) - c(y) \leq \tilde{v}(\tilde{y}) - c(\tilde{y})$.  Furthermore, we have $\tilde{v}(\tilde{y}) - c(\tilde{y})=h(x,Q(\tilde{y}))-v^b(x) + g(Q(\tilde{y})) = b(x,\tilde{y})-v^{b}(x)+c(\tilde{y})$  
\end{newC}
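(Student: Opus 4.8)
The plan is to reduce everything to three ingredients: the biconjugacy identity for $b$-transforms, the fact that $b(x,y)-b(x_0,y)=h(x,Q(y))$ is constant along each fiber $Q^{-1}(w)=L_y$, and the consequence that, once the price schedule is replaced by the lowered schedule $(v^b)^b$, the \emph{consumer's surplus itself becomes constant along fibers}. Writing $(v^b)^h(w):=\sup_{x}h(x,w)-v^b(x)$ for the effective transform, the relation $b(x,y)=h(x,Q(y))+b(x_0,y)$ gives, for every $y$,
\[
(v^b)^b(y)=b(x_0,y)+(v^b)^h(Q(y)), \qquad b(x,y)-(v^b)^b(y)=h(x,Q(y))-(v^b)^h(Q(y)),
\]
so the surplus under $(v^b)^b$ depends on $y$ only through its fiber $Q(y)$. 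These identities are the computational backbone of all three assertions.

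I would prove $\tilde v^b(x)=v^b(x)$ first. Each fiber $Q^{-1}(w)$ is closed in $\overline Y$, hence compact, so the infimum defining $g$ is attained and $M_w\neq\emptyset$ meets every fiber. Since $b(x,\cdot)-(v^b)^b(\cdot)$ is constant on fibers, restricting the supremum from $\overline Y$ to $\bigcup_w M_w$ (the set where $\tilde v=(v^b)^b$ is finite) changes nothing, and therefore $\tilde v^b(x)=\sup_{y\in\overline Y}b(x,y)-(v^b)^b(y)=((v^b)^b)^b(x)=v^b(x)$ by the standard triple-transform identity $((v^b)^b)^b=v^b$.

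For the profit formulas, optimality of $\tilde y$ for $\tilde v$ together with the first assertion gives $\tilde v(\tilde y)=b(x,\tilde y)-\tilde v^b(x)=b(x,\tilde y)-v^b(x)$; substituting $b(x,\tilde y)=h(x,Q(\tilde y))+b(x_0,\tilde y)$ and, since $\tilde y\in M_{Q(\tilde y)}$, $c(\tilde y)-b(x_0,\tilde y)=g(Q(\tilde y))$, reproduces the two displayed expressions for $\tilde v(\tilde y)-c(\tilde y)$. For the inequality, I treat the good $y$ that is optimal under $v$: here $v(y)=b(x,y)-v^b(x)$, and writing $b(x,y)=h(x,Q(y))+b(x_0,y)$ together with $c(y)-b(x_0,y)\ge g(Q(y))$ yields $v(y)-c(y)\le h(x,Q(y))-v^b(x)-g(Q(y))$. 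Comparing with the formula for $\tilde v(\tilde y)-c(\tilde y)$ just obtained, the claimed inequality collapses to the single fact that $Q(y)=Q(\tilde y)$, i.e. that the $v$-optimal good and the $\tilde v$-optimal good lie in the \emph{same} fiber.

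The crux — and the main obstacle — is exactly this fiber-matching, which is why the statement is only valid for almost all $x$. I would argue that both $Q(y)$ and $Q(\tilde y)$ maximize the effective surplus $w\mapsto h(x,w)-(v^b)^h(w)$ over $W$: for $\tilde y$ this is immediate from $\tilde v=(v^b)^b$ on $M_w$ and $\tilde v^b=v^b$, while for $y$ it follows because $(v^b)^b\le v$ forces the fiber $Q(y)$ to attain the maximal value $v^b(x)$. It then remains to invoke uniqueness of the maximizer of $h(x,\cdot)-(v^b)^h(\cdot)$ for almost every $x$. This is the generalized single-crossing property of the effective, equal-dimensional problem $(X,W,h)$: although single-crossing fails for the original $(X,Y,b)$ when $n>m$, condition \textbf{(B1)} transfers to $h$ on the $m$-dimensional effective space (as in the arguments of the previous section), so wherever $v^h$ is differentiable — that is, almost everywhere — the optimal $w$ solves $D_xh(x,w)=Dv^h(x)$, and injectivity of $w\mapsto D_xh(x,w)$ pins it down uniquely. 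This is the one genuinely nontrivial step; the rest is transform bookkeeping built on the two displayed fiber identities.
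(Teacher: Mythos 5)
Your proposal is correct and follows essentially the same route as the paper: $\tilde v^b=v^b$ via constancy of the reduced surplus $b(x,\cdot)-(v^b)^b(\cdot)$ on fibers, fiber-matching of $y$ and $\tilde y$ at points of differentiability of $v^b$ via the envelope condition, and the definition of $g$ as a fiberwise infimum for the profit comparison. The only cosmetic difference is that you route the envelope/injectivity step through the effective transform $(v^b)^h$ on $W$, where the paper works directly with $Dv^b(x)=D_xb(x,y)=D_xb(x,\tilde y)$ and the level sets $L_y$.
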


The interpretation of this result is that, in order to maximize her profits, the monopolist should only offer those goods which maximize $y \mapsto b(x_0, y)-c(y)$ over the set $Q^{-1}(w)$ for some $w$.  Any utility function $v^b$ can be implemented by offering only these goods -- or charging $v(y) = \infty$ for all other goods $y$.  By doing this for a given utility, the monopolist forces each consumer to buy the good which offers her the highest possible profit.  This resulting profit is the same as her effective profit for that price schedule.

\begin{proof}
We first show $v^b(x) = \tilde{v}^b(x)$.   It is well known that $v^b(x) = \sup_{x \in \overline{X}}b(x,y) - (v^b)^b(y)$.  As $(v^b)^b(y) \leq \tilde{v}(y)$, we have $v^b(x) \geq \tilde{v}^b(x)$.    For a given $x$, choose $y$ such that $v^b(x) = b(x,y) -(v^b)^b(y)$.  The argument in the proof of Lemma \ref{con} implies that we have $v^b(x) = b(x,\overline{y}) -(v^b)^b(\overline{y})$ for all $\overline{y} \in L_y = Q^{-1}(w)$, where $w = Q(y)$. In particular, choosing $\tilde{y} \in M_w$, 
\begin{eqnarray*}
v^b(x)& =& b(x,\tilde{y}) -(v^b)^b(\tilde{y})\\
& =& b(x,\tilde{y}) -\tilde{v}(\tilde{y}) \\
&\leq& \sup_{y \in \overline{Y}}b(x,y) -\tilde{v}(y)\\
&=&\tilde{v}^b(x).
\end{eqnarray*}
Therefore, $v^b(x) = \tilde{v}^b(x)$.  Now, choose $x$ where $v^b$ is differentiable; this holds for almost all $x$.  For any $y \in \argmax_{y \in \overline{Y}} b(x,y)-v(y)$, we have $Dv^b(x) = D_xb(x,y)$.  Therefore $D\tilde{v}^b(x) = D_xb(x,y)$, which implies that, for any $\tilde{y} \in   \argmax_{\overline{y} \in \overline{Y}} b(x,\overline{y})-\tilde{v}(\overline{y})$ we must have $\tilde{y} \in L_y$.  Choosing $\tilde{y} \in M_w$ for $w = Q(y)$, we also have
\begin{eqnarray*}
v(y)-c(y) & = & b(x,y) -v^b(x) -c(y) \\
& \leq & \sup_{\overline{y} \in L_y}b(x,\overline{y}) -v^b(x) -c(\overline{y})\\
&=& \sup_{\overline{y} \in L_y} h(x,Q(y))+b(x_0,\overline{y})-v^b(x)-c(\overline{y})\\
&=& \sup_{\overline{y} \in L_y} h(x,Q(y))+b(x_0,\overline{y})-v^b(x)-c(\overline{y})\\
&=& h(x,Q(y)-v^b(x)+\sup_{\overline{y} \in L_y}b(x_0,\overline{y})-c(\overline{y})\\
&=& h(x,Q(y)-v^b(x)-\inf_{\overline{y} \in L_y}c(\overline{y})-b(x_0,\overline{y})\\
&=& h(x,Q(\tilde{y}))-v^b(x) -c(\tilde{y}) +b(x_0,\tilde{y})\\
&=& h(x,Q(\tilde{y}))-v^b(x) -g(Q(\tilde{y}))
\end{eqnarray*}
Noting that 
\begin{eqnarray*}
h(x,Q(\tilde{y}))-v^b(x) -c(\tilde{y}) +b(x_0,\tilde{y})& = & b(x, \tilde{y}) -v^b(x) -c(\tilde{y})\\
& =&(v^b)^b(\tilde{y})-c(\tilde{y})
\end{eqnarray*}
completes the proof.

\end{proof}

\newtheorem{reduce}[newC]{Corollary}
\begin{reduce}
Maximizing the principal's profits is equivalent to maximizing her effective profits.
\end{reduce}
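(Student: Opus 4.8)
The plan is to prove the two optimization problems share the same optimal value and that their maximizers correspond, using Proposition 5.1 as the engine. The idea is to set up two maps: a \emph{projection} sending a real price schedule $v$ on $Y$ to an effective schedule $\omega$ on $W$, and a \emph{lift} going the other way, and to verify that, once attention is restricted to schedules supported on the efficient goods $\bigcup_w M_w$, each preserves the principal's profit. Here the effective profit is the value $P_{eff}(\omega^h)=\int_X (h(x,w_{\omega^h}(x))-\omega^h(x)-g(w_{\omega^h}(x)))\,d\mu(x)$ of the equal-dimensional principal-agent problem $(X,W,h,g)$.

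First I would establish the inequality $\sup_v P(v^b)\le \sup_\omega P_{eff}(\omega^h)$. Given any real schedule $v$ with utility $v^b$, Proposition 5.1 produces $\tilde v$, supported on $\bigcup_w M_w$, with $\tilde v^b=v^b$ and with weakly larger per-consumer profit; integrating against $\mu$ gives $P(v^b)\le P(\tilde v^b)$. To $\tilde v$ I associate the effective schedule $\omega(w):=\tilde v(\tilde y)-b(x_0,\tilde y)$ for $\tilde y\in M_w$. Using $h(x,w)=b(x,y)-b(x_0,y)$ one checks the induced effective utility satisfies $\omega^h=v^b$ on $X$, while the final identity of Proposition 5.1, namely $\tilde v(\tilde y)-c(\tilde y)=h(x,Q(\tilde y))-v^b(x)-g(Q(\tilde y))$, shows that the integrand defining $P(\tilde v^b)$ agrees pointwise with the one defining $P_{eff}(\omega^h)$. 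Hence $P(v^b)\le P(\tilde v^b)=P_{eff}(\omega^h)$, and taking suprema yields the claimed inequality.

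Next I would run the construction backwards to get the reverse inequality. Given an effective schedule $\omega$ on $W$, define a real schedule by $v(\tilde y)=\omega(Q(\tilde y))+b(x_0,\tilde y)$ for $\tilde y\in\bigcup_w M_w$ and $v(y)=+\infty$ otherwise. Since only efficient goods carry a finite price, each consumer's problem reduces to a choice of effective good, so $v^b=\omega^h$; and because $g(w)=c(\tilde y)-b(x_0,\tilde y)$ on $M_w$, one finds $v(\tilde y)-c(\tilde y)=\omega(w)-g(w)$, which identifies $P(v^b)$ with $P_{eff}(\omega^h)$ term by term. This gives $\sup_\omega P_{eff}(\omega^h)\le \sup_v P(v^b)$, so the two values coincide; moreover the projection and lift send maximizers to maximizers, which is exactly the asserted equivalence.

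The algebraic identities are routine, being instances of computations already carried out in the proof of Proposition 5.1. The main obstacle I anticipate is measure-theoretic bookkeeping: one must ensure that the selection $x\mapsto y_{\tilde v^b}(x)$, and hence $x\mapsto w_{\omega^h}(x)$, is defined $\mu$-almost everywhere and measurable, which is where the almost-everywhere differentiability of $v^b$ and the tie-breaking convention from the start of the section enter, and one must confirm $\omega(w)$ is well-defined independent of the representative $\tilde y\in M_w$. Since $\mu$ is absolutely continuous, the chosen good is pinned down almost everywhere by $Dv^b(x)=D_xb(x,\cdot)$, so these points present no genuine difficulty and the corollary follows.
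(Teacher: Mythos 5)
Your proposal is correct and follows essentially the same route as the paper: both use Proposition 5.1 to replace an arbitrary schedule $v$ by the schedule $\tilde v$ supported on $\bigcup_w M_w$ without lowering profits, and then identify the profit from $\tilde v$ with the effective profit of the induced schedule on $W$. Your write-up is more complete than the paper's one-sentence proof, since you also make explicit the reverse inequality via the lift $v(\tilde y)=\omega(Q(\tilde y))+b(x_0,\tilde y)$ and the well-definedness of $\omega$ on $M_w$, both of which the paper leaves implicit.
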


\begin{proof}
The preceding result imlpies that, when maximizing $P$, it is sufficeient to only consider pricing plans of the form (\ref{best}), and that the profits from such a plan $\tilde{v}$ are the same as effective profits made from offering the effective pricing plan $(v^b)^h$.
\end{proof}

The uniqueness argument in \cite{fkm} relies on the $b$-convexity of $c$; we verify below that this convexity carries over when we reduce to an equal dimensional problem.

\newtheorem{b-con}[newC]{Proposition}
\begin{b-con}
(i)If $c$ is $b$-convex, $g$ is $h$-convex.\\
(ii) $c$ is strictly $b$-convex if and only if $g$ is strictly $h$-convex.
\end{b-con}
 \begin{proof}
  For a $b$ convex $c$, we have 
\begin{eqnarray*}
 g(w) & = & \inf_{\{y: D_xb(x_0,y)=w\}}c(y)-b(x_0,y) \\
& = & \inf_{\{y: D_xb(x_0,y)=w\}}\sup_{x \in X}{b(x,y)-c^b(x)}-b(x_0,y) \\
& = & \inf_{\{y: D_xb(x_0,y)=w\}}\sup_{x \in X}h(x,w)-c^b(x) \\
& = & \sup_{x \in X}h(x,w)-c^b(x) \\
\end{eqnarray*}

Now, if $c$ is strictly $b$-convex, then $y_{c^b}(x) = \argmax_{y \in \overline{Y}} b(x,y)-c(y)$ is continuous.  Therefore,

\begin{eqnarray*}
y_{c^b}(x) &=& \argmax_{y \in \overline{Y}} b(x,y)-c(y)\\
&=& \argmax_{y \in \overline{Y}} h(x,Q(y))-c(y) +b(x_0,y)
\end{eqnarray*}   
The maximization above can be broken into two steps: first, fix $w \in \overline{W}$ and choose $y \in Q^{-1}(w)$ to maximize  $-c(y) +b(x_0,y)$, or equivalently, minimze $c(y) -b(x_0,y)$.  Denote the resulting $y = F(w)$. The second step is to then choose $w$ to maximize $h(x,w) -c(F(w)) +b(x_0,F(w)) = h(x,w) -g(w)$; that is, to find $w_{g^h}(x)$.  We have shown, then, that $y_{c^b}(x)) = F(w_{g^h}(x))$.  Now note that, as $F(w) \in Q^{-1}(w)$, we have, $Q(y_{c^b}(x)) = w_{g^h}(x)$, which implies (ii), as in the proof of Proposition \ref{screencon}.

 \end{proof}
 
The converse of part (i) in the preceding result is false -- it is possible for $g$ to be $h$-convex without $c$ being $b$ convex. The reason for this is that the $b$ convexity of $c$ is a property that concerns every $y \in Y$, while the $h$ convexity of $g$ concerns only those $y$ that minimize $c(y) - b(x_0,y)$ over a level set $L_y$.  One can choose $c$ such that $c(y) = \sup_{x \in \overline{X}} b(x,y) -c^b(x)$ for any $y \in M_w$ for some $w$, but not for other goods $y \in \overline{Y}$.  Such a cost will not $b$-convex, but the corresponding $g$ will be $h$-convex.

As in the last section, under these conditions, rather than solving the principal-agent problem on $X \times Y$ with preference function $b$ and cost $c$ we can solve it on $X \times W$ with preference function $h$ and cost $g$.  In particular, direct analogues of Theorems \ref{eff}, \ref{prop} and \ref{b3} and Corollaries \ref{b3con} and \ref{unique} hold in this setting.
\section{Conclusions}
We have shown that the $b$-convexity of the the space of products $Y$ (one half of condition \textbf{(B2)}) is necessary for the set $U_{b,\phi}$ of allowable utilites to be convex.  Furthermore, when $m>n$, we have shown that under this condition, the problem reduces to a problem with equal dimensions.  A similar result holds when $n>m$ and $X$ is $b$-convex (the other half of\textbf{(B2)}).   

We used these observations to show that, nominally, the main result of Figalli, Kim and Mccann holds for \textit{any} $n$ and $m$: assuming \textbf{(B0)}-\textbf{(B2)}, $U_{b,\phi}$ is convex if and only if \textbf{(B3)} holds.  However, we should bear in mind that \textbf{(B2)} is a very strong condition when $n \neq m$; as mentioned above, it effectively reduces the problem to a new screening problem where both spaces have dimension $\min(n,m)$. 

Economic consequences can then be deduced as in \cite{fkm} under condition \textbf{(B3)}.


\begin{thebibliography}{99}
\bibitem{arm} Armstrong, M.  Multiproduct nonlinear pricing.  \textit{Econometrica}, 64 (1996), 51-75.

\bibitem{bas} Basov, S. Hamiltonian approach to multi-dimensional screening.  \textit{J. Math. Econ.}, 36 (2001), 77-94.

\bibitem{bas2} Basov, S. Multi-dimensional screening.  Springer-Verlag.  Berlin, 2005.

\bibitem{barmy}  Baron, D.P. and Myerson, R.B.  Regulating a monopolist with unknown costs.  \textit{Econometrica} 50 (1982) 911-930.

\bibitem{bc} Buttazzo, G. and Carlier, G. Optimal spatial pricing strategies with transportation costs. To
appear in \textit{Contemp. Math. (Israel Mathematical Conference Proceedings)}.

\bibitem{car} Carlier, G. A general existence result for the principal-agent problem with adverse selection.  \textit{J. Math. Econom.}, 35 (2001) 583-594.

\bibitem{ds} Deneckere, R. and Severinov, S. Multi-dimensional screening with a one dimensional allocation space. Preprint.

\bibitem{fkm} Figalli, A., Kim, Y.-H. and McCann, R.J. When is multidimensional screening a convex program?  \textit{J. Econom. Theory}, 146 (2011) 454-478.

\bibitem{gm} Gangbo, W. and McCann, R.J. The geometry of optimal transportation. \textit{Acta Math.}, \textbf{177} (1996), 113-161.

\bibitem{km} Kim, Y-H. and McCann, R.J. Continuity, curvature and the general covariance of optimal transportation.  \textit{J. Eur. Math. Soc.}   12 (2010), pp. 1009-1040

\bibitem{loeper} Loeper, G. On the regularity of maps solutions of optimal transportation problems. \textit{Acta Math.} 202 (2009), 241-283.

\bibitem{mtw} Ma, X-N., Trudinger, N., and Wang, X-J. Regularity of potential functions of the optimal transportation problem.  \textit{Arch. Rational Mech. Anal.},  177 (2005), 151-183.

\bibitem{McAMcM}  McAfee, R.P. and McMillan, J.  Multi-dimensional incentive compatibility and mechanism design.  \textit{J. Econom. Theory} 46 (1988) 335-354.

\bibitem{mir}  Mirrlees, J.A.  An exploration in the theory of optimum income taxation.  \textit{Rev. Econom. Stud.}, 38 (1971) 175-208.

\bibitem{mp} Monteiro, P.K. and Page, F.H. Optimal selling mechanisms for multiproduct monopolists: incentive compatibility in the presence of budget constraints.  \textit{J. Math Econom.} 30 (1998) 473-502.

\bibitem{mr} Mussa, M. and Rosen, S.  Monopoly and product quality.  \textit{J. Econ. Theory}, 18 (1978), 301-317.

\bibitem{p} Pass, B.  Regularity of optimal transportation between spaces with different dimensions.  Preprint available at arXiv:1008.1544.

\bibitem{QR}  Quinzii, M. and Rochet, J.-C.  Multidimensional screening.  \textit{J. Math. Econom.}  14 (1985) 261-284.

\bibitem{rc}  Rochet, J.-C. and Chone, P. Ironing, sweeping and multidimensional screening.  \textit{Econometrica}, 66, 783-826.

\bibitem{rs}  Rochet, J.-C. and Stole, L.A.  The economics of multidimensional screening.  \textit{Advances in Economics and Econometrics}, M. Dewatripont, L.P. Hansen and S.J. Turnovsky, eds.  115-150, Cambridge University Press, Cambrigde, 2003.

\bibitem{spen} Spence, M.  Competitive and optimal responses to signals: An analysis of efficiency and distribution.  \textit{J. Econom. Theory},7 (1974), 296-332.

\bibitem{V} Villani, C., \textit{Topics in optimal transportation}, volume 58 of Graduate Studies in Mathematics.  American Mathematical Society, Providence, 2003.

\bibitem{V2} Villani, C., \textit{Optimal transport: old and new}, volume 338 of Grundlehren der mathematischen Wissenschaften.  Springer, New York, 2009.

\bibitem{wil} Wilson, R.  \textit{Nonlinear pricing}.  Oxford University Press, Oxford, 1993.

\end{thebibliography}
\end{document}